\documentclass[11pt, a4paper]{article}
\usepackage{amsmath,amssymb,amsthm,epsfig,setspace}
\usepackage{color}

\newtheorem{theorem}{Theorem}[section]
\newtheorem{Rmk}[theorem]{Remark}

\newtheorem{Thm}[theorem]{Theorem}
\newtheorem{Cor}[theorem]{Corollary}
\newtheorem{Prop}[theorem]{Proposition}
\newtheorem{Def}[theorem]{Definition}

\title{On spatial Gevrey regularity for some strongly dissipative second order evolution equations}
\author{Alain Haraux$^1$ and Mitsuharu  \^{O}tani$^2$\footnote{Partly supported by the Grant-in-Aid for Scientific Research \#15K13451, , the Ministry of Education, Culture, Sports, Science, and Technology, Japan}
\\ \\ 
$^1$ {\small Sorbonne Universit\'e, Universit\'e Paris-Diderot SPC, CNRS, INRIA,} \\
\noindent{\small Laboratoire Jacques-Louis Lions,  LJLL, F-75005,
Paris, France.} \\
\noindent $^2$ {\small Department of Applied Physics, School of Science and Engineering,} \\
{\small Waseda University 3-4-1, Okubo, Shinjuku-ku, Tokyo, Japan 169-855} \\ 
}
\date{}
%\textwidth=17truecm
%\hoffset=-1truecm

\begin{document}
\maketitle
 \begin{abstract}  Let $A$ be a positive self-adjoint linear operator acting on a real 
  Hilbert space $H$ and $\alpha , c$ be positive constants.  We show that all solutions of the evolution equation $ u''+ Au+ c A^\alpha u' = 0$ 
 with $u(0) \in D(A^{\frac{1}{2}}), \ u'(0) \in H$ belong for all $t>0$ to the Gevrey space 
  $ G(A, \sigma)$  with  $\sigma = \min\{ \frac {1}{\alpha}, \frac {1}{1-\alpha}\}$. This result is optimal in the sense that $\sigma$ can not be reduced in general. 
For the damped wave equation (SDW)$_\alpha$ corresponding to the case where $A = -\Delta$ with domain $D(A) = \{ w\in H^1_0(\Omega), \Delta  w \in L^2(\Omega)\}$ with $\Omega$  any open subset of $\mathbb{R}^N$ and  $(u(0), u'(0)) \in H^1_0(\Omega)\times L^2(\Omega)$, the unique solution $u$ of
 (SDW)$_\alpha$ satisfies $ \forall t>0, \quad u(t) \in G^s(\Omega)$  with $ s = \min \{ \frac {1}{2\alpha}, \frac {1}{2(1-\alpha)}\} $, and  this result is also optimal. 
  
  \vspace{4ex}

\noindent{\bf Mathematics Subject Classification 2010 (MSC2010):}
35L10, 35B65, 47A60.

\vspace{4ex}

\noindent{\bf Key words:}  linear evolution equations, dissipative hyperbolic equation, fractional power, Gevrey regularity.\end{abstract}

\newpage

\section{Introduction}
  Let $A$ be a positive self-adjoint linear operator acting on a real 
  Hilbert space $H$, let $A^\alpha$ be the fractional power of $A$ 
  of order $\alpha > 0$ and let $c$ be a positive constant. We consider the following evolution equation :
\begin{equation}\label{main}
 u''+ Au+ c A^\alpha u' = 0.
\end{equation}

The time regularity and smoothing effect at $t>0$ on solutions of \eqref{main} have been studied by quite a few authors, cf., e.g., \cite{CT1, CT2, CT3, G^2H}. In \cite{HO}, when $A$ is coercive, the authors of the present paper established by rather elementary means (avoiding complex analysis) that the semi-group generated by \eqref{main} is analytic if $\alpha \ge 1/2 $ and Gevrey of order $\frac{1}{2\alpha}$ if $\alpha < 1/2 $. But concerning spatial regularity, Theorem 5.1 from \cite{HO} only implies $C^\infty $ interior spatial regularity for $A = -\Delta$  or more generally for $A$ an elliptic operator with smooth coefficients. Now we shall study more  specifically the problem of regularity of $u(t)$ for $t>0$ when the initial state $(u(0), u'(0))$ lies in the standard energy space $V\times H$ with $V = D(A^{1/2}).$  Before stating any precise result, an important remark will allow us to understand that the exponent $\alpha =1/2 $, corresponding to the so-called structural damping (cf. \cite{CR}), is very special.  Indeed we notice that the time scaling $u(t): = v(kt)$ transforms the equation into $$  v''+ Bv + ck^{2\alpha-1} B^\alpha v' = 0$$  with $B= k^{-2}A$. Therefore whenever $\alpha \not=1/2 $, we can select $k$ in such a way that the coefficient of $B^{\alpha}v'$ becomes 1. On the contrary, if  $\alpha =1/2 $, the coefficient of $B ^ {1/2}v'$ is equal to $c$. The equations with different values of $c$ are all different, and they indeed have different properties even if $H = \mathbb{R}$ and $A= I$. In that most elementary case, the value $c=2$ is the threshold deciding the oscillatory or non-oscillatory character of solutions. Finally, if $\alpha =1/2 $ and $A$ has compact inverse, it can be seen that solutions of  \eqref{main}  of the form $$u(t)= w(t)\varphi $$ with $A\varphi= \lambda \varphi $ are all given by  $w(t) = z(\lambda t) $ where $z$ is a solution of the ODE 
$$  z''+z+cz' = 0. $$ In simple terms, those solutions all have the same shape up to time scaling, and a larger eigenvalue gives  rise to a ``faster" solution. 

As for the level of spatial smoothing effect, one might have thought that it increases with $\alpha$. But Remark 5.2 from \cite{HO} completely disqualifies this idea, since for $\alpha \ge 1$ there is no spatial smoothing effect at all. As we shall see the regularity of solutions for $t>0$ culminates for $\alpha =1/2 $. In the case of the wave equation (i.e. $A = -\Delta$), the value $\alpha =1/2 $ is the only one for which all solutions with initial data in the energy space are analytic in space for all $t>0$. For any $\alpha\in (0,1)$ other than $1/2$, the spatial smoothing effect for the wave equation is best described by a local Gevrey regularity in the sense of \cite{G}. 

Gevrey spaces have become rather popular when dealing with hyperbolic problems, and after the pioneering works on analyticity of solutions to PDE such as \cite {K1, K2} based on the method of \cite{Hor}, there are more recent papers dealing with interior Gevrey regularity of solutions to elliptic equations, cf.,  e.g., \cite{ N-Z, T}. Theorem 3 from \cite{ N-Z} will allow us to study rather easily the level of spatial smoothing effect at $t>0$ for  equation  \eqref{main} in the concrete PDE cases. 

The plan of the paper is as follows: In Section 2 we state and prove the main abstract result, giving first a complete explicit proof in the coercive case and showing the additional necessary steps to obtain the result in the general, possibly non-coercive, case. In Section 3 we apply the general result to the case of the wave equation in any domain of $\mathbb{R}^N$, using a useful result from \cite{ N-Z} which connects local ultra-differentiability properties of solutions to Gevrey type estimates for powers of the main operator $A$. In Section 4 we establish several optimality results, both for the abstract theorem and for the local Gevrey class of solutions to the 1D wave equation with strong dissipation. Section 5 is devoted to higher order equations in space and various extensions of the results. Finally the appendix discusses equivalent definitions of the Gevrey classes for both operators and functions, and develops some useful tools used in the previous sections. 
%%%%%%%%%%%%%%%%%%%%%%%%%%%%%%%%%%%%%%%%%%%%%%%%

%%%%%%%%%%%%%%%%%%%%%%%%

\section{An abstract regularity result} Before stating our main result we need to introduce some notation. Inspired by the notion of ``analytic vectors" for an operator $A$ defined by Nelson in \cite{N} and the Gevrey regularity class of functions (cf. \cite{G}) we define Gevrey vectors as follows 

\begin{Def} Let $A$ be any positive self-adjoint operator on $H$. A vector $u\in H$ will be called Gevrey of order $s>0$ with respect to $A$ if $u\in D(A^n) $ for all n and for some $R>0$ we have $$ \forall n\in\mathbb{N}-\{0\}, \quad  |A^n u|\le R^n n^{s n}.$$ 
In this case we write $ u\in G(A, s).$ 
\end{Def}

\begin{Rmk} The apparent divergence between this definition and those of \cite{N} and \cite{G} will be clarified in the appendix, Proposition 6.1. 
\end{Rmk}

\begin{Rmk} It is clear that for any $ s>0$,$$  G( I+ A, s) = G(A, s) $$   and for any $\lambda>0, s>0$,  $$  G(\lambda A, s) = G(A, s).$$  
\end{Rmk}

\begin{Rmk} We shall prove in the appendix that for any positive self-adjoint operator $A$ and any positive numbers $\alpha, s$ we have 
$$  G (A, s)  = G(A^\alpha, \alpha s).$$  
\end{Rmk}
\bigskip\noindent Our main result is the following. 

\begin{Thm}\label{mainth} For any $\alpha\in (0,1)$ and any  $(u_0, u_1)) \in V\times H$ , the unique solution $u$ of \eqref{main} with initial date $u(0) = u_0, \quad u'(0) = u_1$ satisfies 
\begin{equation}\label{mainrs}
 \forall t>0, \quad u(t) \in G(A, \sigma ); 
\quad \sigma = \min\{ \frac {1}{\alpha}, \frac {1}{1-\alpha}\}. 
\end{equation}
\end{Thm}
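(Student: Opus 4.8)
The plan is to diagonalize the equation spectrally and obtain sharp bounds on the solution operator restricted to each spectral subspace. Write the solution via the spectral measure of $A$: for $u(0)=u_0, u'(0)=u_1$, we have $u(t) = \int_0^\infty \bigl( K_0(t,\lambda) \, dE_\lambda u_0 + K_1(t,\lambda)\, dE_\lambda u_1 \bigr)$, where $K_0(t,\lambda), K_1(t,\lambda)$ are the solutions of the scalar ODE $w'' + \lambda w + c\lambda^\alpha w' = 0$ with the two canonical sets of initial data. Then $|A^n u(t)|^2 = \int_0^\infty \lambda^{2n}\, |K_0(t,\lambda) \hat u_0(\lambda) + K_1(t,\lambda)\hat u_1(\lambda)|^2$ in the obvious notation, so the whole problem reduces to proving a pointwise bound of the form $\lambda^n\, |K_j(t,\lambda)| \le C(t)\, R^n n^{\sigma n}$ uniformly in $\lambda$, after absorbing the $V\times H$ norm of the data (which controls $\int \lambda\,|\hat u_0|^2 + \int |\hat u_1|^2$). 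Because $\sup_{\lambda>0}\lambda^n e^{-c t \lambda^\beta} \sim (n/(ec t\beta))^{n/\beta}$, a decay estimate $|K_j(t,\lambda)| \lesssim \lambda^{m_j} e^{-\kappa t \lambda^\beta}$ with exponent $\beta = \min\{\alpha, 1-\alpha\}$ will deliver exactly $\sigma = 1/\beta = \max\{1/\alpha, 1/(1-\alpha)\}$... wait, that is $\min\{1/\alpha,1/(1-\alpha)\}$ only when we take $\beta = \max\{\alpha,1-\alpha\}$; so the key is to show the \emph{slower} of the two characteristic decay rates governs, i.e. the relevant exponent in $e^{-\kappa t\lambda^\beta}$ is $\beta=\max\{\alpha,1-\alpha\}$, giving $\sigma=1/\beta=\min\{1/\alpha,1/(1-\alpha)\}$.

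The core computation is therefore the analysis of the roots of the characteristic polynomial $r^2 + c\lambda^\alpha r + \lambda = 0$, namely $r_\pm(\lambda) = \tfrac{1}{2}\bigl(-c\lambda^\alpha \pm \sqrt{c^2\lambda^{2\alpha} - 4\lambda}\bigr)$. For $\lambda$ large the behavior splits according to whether $\alpha>1/2$ or $\alpha<1/2$: if $\alpha<1/2$ the discriminant is negative (oscillatory regime) and $\mathrm{Re}\,r_\pm = -\tfrac{c}{2}\lambda^\alpha$, so $|K_j|\lesssim (\text{poly})\, e^{-(c/2) t\lambda^\alpha}$ with $\beta=\alpha=\max\{\alpha,1-\alpha\}$ — good; if $\alpha>1/2$ the discriminant is positive and the roots are real, $r_+ \sim -\lambda^{1-\alpha}/c$ and $r_- \sim -c\lambda^\alpha$, and the dominant (slowest) mode decays like $e^{-t\lambda^{1-\alpha}/c}$, so $\beta = 1-\alpha = \max\{\alpha,1-\alpha\}$ — again good. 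The case $\alpha=1/2$ is the limiting/degenerate one where both exponents coincide (and one gets analyticity, $\sigma=2$); for that value one must be a little careful because the two roots can collide, but one still gets $\mathrm{Re}\,r_\pm \lesssim -\kappa\lambda^{1/2}$. In all cases one also needs the small-$\lambda$ regime, but there $\lambda$ is bounded and the bound is trivial (using $t$-continuity of the semigroup and, in the non-coercive case, a harmless lower cutoff). One must also keep track of the algebraic prefactors: in the $\alpha>1/2$ case $K_1$ picks up a factor $\sim 1/|r_+ - r_-| \sim \lambda^{-\alpha}$, which only helps, and $K_0$ stays $O(1)$; these polynomial factors are absorbed into the $R^n n^{\sigma n}$ bound at the cost of enlarging $R$ (and introducing the expected $t^{-\text{const}}$ blow-up as $t\to 0^+$).

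I expect the main obstacle to be organizing the root analysis so that a \emph{single} clean estimate $\sup_{\lambda} |K_j(t,\lambda)|\,\lambda^{n} \le C(t) R^n n^{\sigma n}$ holds uniformly, rather than proving several case-by-case estimates that then have to be reconciled; in particular, handling the transition region where $c^2\lambda^{2\alpha}\approx 4\lambda$ (roots nearly equal, for $\alpha>1/2$) requires care since naive bounds involving $1/|r_+-r_-|$ blow up there — one resolves this by using instead the integral/Duhamel representation $K_1(t,\lambda) = \int_0^t e^{r_+(t-\tau)}\cdots$, or simply the uniform bound $|K_1(t,\lambda)|\le t\,e^{-\kappa t\lambda^\beta}$ valid across the transition by continuity. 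The secondary technical point is the passage from the coercive case (where $\lambda\ge\lambda_0>0$ and everything is clean) to the general self-adjoint case, where $0$ may lie in the spectrum; there one splits $H = E_{[0,1]}H \oplus E_{(1,\infty)}H$, notes that on the low-frequency part the $A^n$ are all bounded and the flow is a bounded perturbation giving Gevrey (indeed any) regularity for free, and applies the coercive estimate on the high-frequency part. Finally, by Remark 2.5 ($G(A,\sigma)=G(A^\alpha,\alpha\sigma)$) and Remark 2.4 one may, if convenient, normalize $c$ via the scaling noted in the introduction and work with the equation in the form $v''+Bv+B^\alpha v'=0$ to streamline the constants.
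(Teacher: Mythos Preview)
Your approach via the spectral resolution and explicit root analysis is sound and will prove the theorem, but note a slip in your bookkeeping: for $\alpha<\tfrac12$ you write ``$\beta=\alpha=\max\{\alpha,1-\alpha\}$'' and similarly for $\alpha>\tfrac12$, when in both cases the exponent you actually compute is $\beta=\min\{\alpha,1-\alpha\}$, whence $\sigma=1/\beta=\max\{1/\alpha,1/(1-\alpha)\}$. This is precisely what the paper's \emph{proof} establishes (for $\alpha\le\tfrac12$ it shows $u(t)\in G(A,1/\alpha)$, for $\alpha\ge\tfrac12$ it shows $u(t)\in G(A,1/(1-\alpha))$) and what the optimality results of Section~4 confirm; the ``$\min$'' in the statement is evidently a misprint for ``$\max$''. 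Your confusion is inherited from the statement, and once the labels are fixed your argument goes through.

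The route, however, is genuinely different. The paper does not touch the spectral resolution for the regularity direction. It introduces the modified energy $\Phi(u,u')=|u'|^2+|A^{1/2}u|^2+\tfrac12|Bu|^2+(Bu,u')$ with $B=cA^\alpha$, whose derivative equals $-(|B^{1/2}u'|^2+(Au,Bu))$, and from the resulting integral inequality extracts a one-step smoothing estimate $|A^{\gamma}U(t)|_E\le K t^{-1}|U(0)|_E$ with $\gamma=\alpha$ (for $\alpha\le\tfrac12$) or, after the substitution $v=A^{\alpha-1/2}u$, $\gamma=1-\alpha$ (for $\alpha\ge\tfrac12$). Iterating this $m$ times on subintervals of length $t/m$ yields $|A^{m\gamma}U(t)|_E\le(Km/t)^m|U(0)|_E$, i.e.\ $u(t)\in G(A^\gamma,1)=G(A,1/\gamma)$. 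The non-coercive case is handled not by a spectral cutoff but by working with $I+A$ in place of $A$ and controlling the extra terms via the operator inequalities of the appendix. Your spectral method is more explicit, makes the $t\to0^+$ blow-up and the optimality transparent (indeed Section~4 \emph{does} argue by eigenfunction expansion, so your approach unifies the upper and lower bounds). The paper's energy/iteration method is shorter, avoids all root and transition-region case analysis, and---as remarked at the end of Section~5---extends verbatim to dampings such as $B=\sum c_iA^{\alpha_i}$ or $B=c(dI+A)^\alpha$ that are not pure powers of $A$, where the explicit characteristic-root computation would no longer be available.
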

 
 \begin{proof} We consider successively the cases $0< \alpha\le \frac{1}{2}$ and $\frac{1}{2}<\alpha<1. $ In both cases we shall use the notation 
 $$ E = V \times H; \ U = (u,u') $$ and we shall  occasionally drop $t$ to shorten some formulas. 
We first observe that for any positive self-adjoint operator $B$ and any sufficiently smooth solution u of 
$$ 
    u'' + Au + Bu' = 0, 
$$  
we have the formal identity $$ \frac{d}{dt} \{|u'|^2 + |A^{\frac{1}{2}}u|^2 + {\frac{1}{2}}|Bu|^2 +(Bu, u')\} = - \{|B^{\frac{1}{2}}u'|^2 + (Au, Bu)\} ).$$

On the other hand $$ \Phi(u, u'):= |u'|^2 + |A^{\frac{1}{2}}u|^2 + {\frac{1}{2}}|Bu|^2 +(Bu, u')=  {\frac{1}{2}}|u'|^2 + |A^{\frac{1}{2}}u|^2 +{\frac{1}{2}}|u'+Bu|^2 $$ yielding the convenient inequalities $$ 0\le {\frac{1}{2}}|u'|^2 + |A^{\frac{1}{2}}u|^2 \le \Phi(u, u') \le \frac{3}{2}|u'|^2 + |A^{\frac{1}{2}}u|^2 + |Bu|^2.
$$ 
In particular in this fairly general context we have the (formal) inequality 
\begin{equation}\label{est:Bhalfuprime:above}
\int_0^t \{|B^{\frac{1}{2}}u'(s)|^2 + (Au(s), Bu(s))\} ds \le \frac{3}{2}|u'(0)|^2 + |A^{\frac{1}{2}}u(0)|^2 + |Bu(0)|^2. 
\end{equation}

In order to make the proof easier to follow, we give first a complete proof when $A$ is coercive. Let us first apply this formula for $ B = cA\alpha$ with $0< \alpha \le \frac{1}{2}$.  We obtain, since B commutes with all powers of $A$: 
 $$  \min \{c, c^2\}  ~\! t | A^{\frac{\alpha}{2}} u(t), A^{\frac{\alpha}{2}} u'(t)|^{2}_ {V\times H}\le \int_0^t \{|B^{\frac{1}{2}}u'(s)|^2 + (Au(s), Bu(s))\} ds \le K E_0^2$$
 with $ E_0 := |(u(0), u'(0))|_{V\times H}.$ Thus by iterating the process twice and replacing $t$ bu $t/2$ we obtain 
$$ | A^{\alpha} u(t), A^{\alpha} u'(t)|_ {V\times H} \le  K' t^{-1}E_0. $$ 
Replacing $t$ by $\frac{t}{m}$ and iterating $m$ times, we obtain with $K'= K^{\alpha}$
$$ | A^{m\alpha} u(t), A^{m\alpha} u'(t)|_ {V\times H}\le [ \frac {K' m} {t }]^{ m} E_0. $$ 
Hence both $u(t)$ and $u'(t)$  belong to $G(A^\alpha, 1) = G(A, \frac{1}{\alpha})$ for all $t>0$.  
\\ 
For $ B = cA\alpha$ with $\frac{1}{2}\le \alpha <1$, we find by 
\eqref{est:Bhalfuprime:above}  
 $$ t | A^{\frac{\alpha}{2}} u(t), A^{\frac{\alpha}{2}} u'(t)|^{2}_ {V\times H}\le  K | A^{\alpha- \frac{1}{2}} u(0), A^{\alpha- \frac{1}{2}} u'(0)|^{2}_ {V\times H}.$$ Thus by setting $v(t)= A^{\alpha- \frac{1}{2}} u(t) $, we obtain 
$$ 
| A^{1-\alpha} v(t), A^{1-\alpha} v'(t)|_ {V\times H} \le  K' t^{-1}|(v(0), v'(0))|_{V\times H}. 
$$ 
 Then by iteration as before: we find that both $v(t)$ and $v'(t)$  belong to $G(A^{1-\alpha}, 1) = G(A, \frac{1}{1-\alpha})$ for all $t>0$, whence follows \eqref{mainrs}. 
\bigskip 
 
Let us now consider the general case, but only with $\alpha\not = \frac{1}{2}$ and then, using the remark of the introduction, we can drop the constant $c.$ 
 
 1) The case $0< \alpha < \frac{1}{2}, c=1$. We start from the inequality 
 
  $$ t | A^{\frac{\alpha}{2}+ \frac{1}{2}} u(t), A^{\frac{\alpha}{2}} u'(t)|^{2}_ {H\times H}\le  \frac{3}{2}|u'(0)|^2 + |A^{\frac{1}{2}}u(0)|^2 + |A^{\alpha} u(0)|^2.$$ When $A$ is non-coercive, the term $|A^{\alpha} u(0)|^2$ cannot be controlled by $|A^{\frac{1}{2}}u(0)|^2$  only. Instead we may use 
$$ 
|A^{\alpha} u(0)|^2\le |u_0|^2+ |A^{\frac{1}{2}}u(0)|^2 
,$$ 
which implies
   $$  
\frac{3}{2}|u'(0)|^2 + |A^{\frac{1}{2}}u(0)|^2 + |A^{\alpha} u(0)|^2 \le 2[|u'(0)|^2 + ||u(0)||^2], 
$$ 
where the norm $|| . ||$ is defined  on $V: = D(A^{1/2}$ by 
 $$ 
 \forall x\in V, \quad  ||x|| = ( |x|^2 +  |A^{\frac{1}{2}} x|^2)^{1/2}.
$$  This will be the only norm used on $V$ later on and we set with $ V\times H: = E$  $$ \forall U = (u, v) \in E, \quad |U|_E = (||u||^2+|v|^2)^{1/2}.$$ 
 A difference with the energy norm used in the coercive case is that now the modified norm is no longer non-increasing. Instead it is easy to prove that for any solution $U = (u, u') $ of \eqref{main}, we have $$\forall t\ge 0, \quad  |U(t)|_E^2 \le e^t  |U(0)|_E^2. $$ 
Next we have to handle powers of the operator $I+A$ instead of $A$. More precisely we want to estimate 
   $ | ((I+A)^{\frac{\alpha}{2}} u(t), (I+A)^{\frac{\alpha}{2}} u'(t))|^{2}_ {E}$ to compare it with $ | (u(0), u'(0))|^{2}_ {E}$. This is not really difficult but has to be done carefully. As a first step we observe that 
   
$$|(I+A)^{\frac{\alpha}{2}} u'(t))|^{2} \le |u'(t)|^2 +  |A^{\frac{\alpha}{2}} u'(t)|^2 $$ as a consequence of the operator inequality $$ (I+A)^\alpha \le I+ A^{\alpha}$$ is valid for any $\alpha \in [0, 1]$. Then we note  
 $$ 
     || (I+A)^{\frac{\alpha}{2}} u(t)||^2   \leq   
              | (I+A)^{\frac{\alpha}{2}} u(t)|^2 + | (I+A)^{\frac{\alpha+1}{2}} u(t)|^2. 
$$ 
First by contraction we have 
$$  
| (I+A)^{\frac{\alpha}{2}} u(t)|^2 \le | (I+A)^{\frac{1}{2}} u(t)|^2 = ||u(t)||^2.
$$ 
Then 
$$  | (I+A)^{\frac{\alpha+1}{2}} u(t)|^2\le |u(t)|^2 +  | A^{\frac{\alpha +1}{2}} u(t)|^2 \le ||u(t)||^2 + | A^{\frac{\alpha +1}{2}} u(t)|^2$$ so that we obtain 
$$ || (I+A)^{\frac{\alpha}{2}} u(t)||^2 \le 2 ||u(t)||^2 + | A^{\frac{\alpha +1}{2}} u(t)|^2$$ and finally 
$$| ((I+A)^{\frac{\alpha}{2}} u(t), (I+A)^{\frac{\alpha}{2}} u'(t))|^{2}_ {E} \le  | A^{\frac{\alpha +1}{2}} u(t)|^2 + |A^{\frac{\alpha}{2}} u'(t)|^2 + 2 |U(t)|^2_E.$$ 
Because  $|U(t)|_E^2 \le e^t  |U(0)|_E^2$ we now obtain 
$$ 
 | ((I+A)^{\frac{\alpha}{2}} u(t), (I+A)^{\frac{\alpha}{2}} u'(t))|^{2}_ {E} 
      \le  2 (e^t + \frac{1}{t}) |U(0)|_E^2.
 $$ 
This is enough to conclude in a few easy steps. 
\\
 
 2) The case $\frac{1}{2}< \alpha < 1, c=1$. We start from the inequality 
 
  $$ 
    t | A^{\frac{\alpha}{2}+ \frac{1}{2}} u(t), A^{\frac{\alpha}{2}} u'(t)|^{2}_ {H\times H}\le  \frac{3}{2}|u'(0)|^2 + |A^{\frac{1}{2}}u(0)|^2 + |A^{\alpha} u(0)|^2.
$$ 
First we have  
  $$ 
    |A^{\frac{1}{2}}u(0)|^2   \le |u_0|^2+ |A^{\frac{1}{2}} u(0)|^2 
           = |(I+A) ^{\frac{1}{2}}u(0)|^2 
             \le  |(I+A) ^{\alpha}u(0)|^2.
$$ 
Moreover 
$$  
   |A^{\alpha} u(0)|^2 \le  |(I+A)^{\alpha} u(0)|^2;\quad |u'(0)|^2 \le  |(I+A) ^{\alpha-\frac{1}{2}}u'(0)|^2
$$ 
and we obtain 
$$ 
  \frac{3}{2}|u'(0)|^2 + |A^{\frac{1}{2}}u(0)|^2 + |A^{\alpha} u(0)|^2 
         \le 2[|(I+A) ^{\alpha-\frac{1}{2}}u'(0)|^2 + ||(I+A) ^{ \alpha}u(0)||^2].
$$ 
So the RHS of the basic inequality is bounded by
$$ 
   2 |(I+A) ^{\alpha-\frac{1}{2}}U(0)|^2_E.
$$ 
   It remains to bound the quantity $ | ((I+A)^{\frac{\alpha}{2}} u(t), (I+A)^{\frac{\alpha}{2}} u'(t))|^{2}_ {E}.$ We first write 
   
   $$ |(I+A)^{\frac{\alpha}{2}} u'(t))|^{2} =  |(I+A)^{\frac{1-\alpha}{2}} (I+A)^{\alpha -\frac{1}{2}}u'(t))|^{2} $$ 
   $$  \le  |(I+A)^{\alpha - \frac{1}{2}} u'(t))|^{2} +  |A^{\frac{1-\alpha}{2}}(I+A)^{\alpha - \frac{1}{2}}  u'(t))|^{2}$$
   $$\le   |(I+A)^{\alpha - \frac{1}{2}} u'(t))|^{2} +  |A^{\frac{1-\alpha}{2}} u'(t))|^{2} +  |A^{\frac{\alpha}{2}} u'(t))|^{2}$$
   $$ \le  |(I+A)^{\alpha - \frac{1}{2}} u'(t))|^{2} +  |u'(t))|^{2} +  2|A^{\frac{\alpha}{2}} u'(t))|^2 $$ 
 $$ 
\le 2 [ |(I+A)^{\alpha - \frac{1}{2}} u'(t))|^{2}  +  |A^{\frac{\alpha}{2}} u'(t))|^2 ].
 $$
   
   A quite similar calculation gives $$ |(I+A)^{\frac{\alpha+1}{2}} u(t))|^{2}  \le 2 [ |(I+A)^{\alpha } u(t))|^{2}  +  |A^{\frac{\alpha +1}{2}} u(t))|^2 ] $$ and by addition we obtain 
   $$  |(I+A)^{\frac{\alpha}{2}} u'(t))|^{2} + |(I+A)^{\frac{\alpha+1}{2}} u(t))|^{2}  \le |A^{\frac{\alpha}{2}} u'(t))|^2 + |A^{\frac{\alpha +1}{2}} u(t))|^2 +2 e^t |(I+A) ^{\alpha-\frac{1}{2}}U(0)|^2_E .$$ Finally we obtain 
   $$  |(I+A)^{\frac{\alpha}{2}} u'(t))|^{2} + |(I+A)^{\frac{\alpha+1}{2}} u(t))|^{2}  \le 2( e^t  + \frac{1}{t}) |(I+A) ^{\alpha-\frac{1}{2}}U(0)|^2_E . $$
   
   Setting  $ v(t):= (I+A) ^{\alpha-\frac{1}{2}}u(t) $ we obtain 
   $$  |(I+A)^{\frac{1-\alpha}{2}} V(t))|^{2} _E \le 2( e^t  + \frac{1}{t}) |V(0)|^2_E . $$ Then the conclusion follows easily as previously. \\
   
   3) The case $\alpha =\frac{1}{2}$ In this case we can follow either the method of case 1 or case 2 but the constant $c$ will appear in the estimate. Since the calculation is just a variant and has been done completely in the coercive case, for the sake of brevity we skip the details. 
   
   \end{proof}

\section{The case of the wave equation with strong damping}

\begin{Def} Let $\Omega$ be any open subset of $\mathbb{R}^N$. A function $f: \Omega \rightarrow \mathbb{R}$ will be called Gevrey of order $s>0$ in $\Omega$ if $f\in C^\infty (\Omega ) $ and for any $K$ compact subset of $\Omega$, there is  $R= R(K)>0$ such that, for any differential monomial 
$ D^p: = D_1^{p_1} D_2^{p_2} ...D_N^{p_N}$ we have 
$$ ||D^p f||_{L^{\infty} (K)}\le R ^{|p|} |p|^{s|p|}. $$

In this case we write $ f\in G^s(\Omega).$ 
\end{Def}

\begin{Rmk} This definition is slightly different from the definition given in the historical literature, in particular in the seminal paper \cite{G}, but it is in fact equivalent and more condensed, cf. appendix, Proposition 6.1. \end{Rmk}

\begin{Thm}\label{wave} Let $\Omega$ be any open subset of $\mathbb{R}^N$ and $H= L^2(\Omega), V = H^1_0(\Omega)$.  
Let $A = -\Delta$ with domain $D(A) = \{ w\in V, \Delta v\in H\}$ For any $\alpha\in (0,1)$ and any  $(u_0, u_1) \in V\times H$ , the unique solution $u$ of \eqref{main} with initial data $u(0) = u_0, \,\,u'(0) = u_1$ satisfies 
\begin{equation}\label{mainrs2}
 \forall t>0, \quad u(t) \in G^s(\Omega) \end{equation} with\begin{equation}s = \min \{ \frac {1}{2\alpha}, \frac {1}{2(1-\alpha)}\} . \end{equation} In particular for $\alpha = 1/2$, $u(t)$ is analytic inside  $\Omega$ for all $t>0$. 
 \end{Thm}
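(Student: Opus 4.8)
The plan is to derive Theorem~\ref{wave} from the abstract Theorem~\ref{mainth} together with Theorem~3 of \cite{N-Z}, which converts global bounds on the iterated powers $A^nf$ of an elliptic operator $A$ into local Gevrey regularity of $f$. First I would apply Theorem~\ref{mainth} with $A=-\Delta$ and domain $D(A)=\{w\in H^1_0(\Omega):\Delta w\in L^2(\Omega)\}$: this operator is positive self-adjoint on $H=L^2(\Omega)$ with $D(A^{1/2})=H^1_0(\Omega)=V$, so for $(u_0,u_1)\in V\times H$ the solution satisfies, at each fixed $t>0$, that $u(t)\in D(A^n)$ for all $n$ and $|A^nu(t)|\le R^{\,n}n^{\sigma n}$ for some $R=R(t)>0$, with $\sigma=\min\{1/\alpha,1/(1-\alpha)\}$; by the Remarks of Section~2 we have $G(I+A,\sigma)=G(A,\sigma)$, so this conclusion holds verbatim whether or not $\Omega$ is such that $A$ is coercive. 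Using $n^n\le e^nn!$, the bound can be rewritten in the form $\|(-\Delta)^nu(t)\|_{L^2(\Omega)}\le C\,\widetilde R^{\,n}(n!)^{\sigma}$ for all $n\ge1$.

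Second, I would feed this estimate into Theorem~3 of \cite{N-Z}. Since $-\Delta$ has constant, hence real-analytic, coefficients, that theorem applies and shows that a function obeying $\|(-\Delta)^nf\|_{L^2(\Omega)}\le C\,\widetilde R^{\,n}(n!)^{\sigma}$ for all $n$ belongs to $G^{s}(\Omega)$ with $s=\sigma/2$. The factor $\tfrac12$ reflects that one application of $-\Delta$ accounts for two spatial derivatives: iterating interior elliptic estimates yields $\|u\|_{H^{2n}(K)}\le\widehat R^{\,n}(n!)^{\sigma}$ on any compact $K\subset\Omega$ (the lower-order $L^2$ term being harmless), and a Sobolev embedding, with $2n$ taken slightly above $|p|$, then gives $\|D^pu(t)\|_{L^\infty(K)}\le R(K)^{|p|}|p|^{s|p|}$ with $s=\sigma/2$ after one more Stirling comparison. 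Hence $u(t)\in G^{s}(\Omega)$ for every $t>0$ with $s=\sigma/2=\min\{1/(2\alpha),1/(2(1-\alpha))\}$, which is \eqref{mainrs2}; for $\alpha=\tfrac12$ one gets $\sigma=2$, so $s=1$ and $u(t)$ is analytic in $\Omega$.

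The only delicate point is the one already settled in \cite{N-Z}: controlling the growth of the constants through the iterated elliptic estimates so that the Gevrey exponent comes out exactly $s=\sigma/2$ rather than something larger, and keeping track of the dependence of $R(K)$ on the compact set $K$. As this is precisely the content of Theorem~3 of \cite{N-Z}, I would invoke it rather than reprove it; everything else reduces to the two elementary Stirling conversions indicated above, after which the proof is complete.
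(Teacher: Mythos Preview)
Your proposal is correct and follows essentially the same route as the paper: apply Theorem~\ref{mainth} to obtain $u(t)\in G(A,\sigma)$ with $\sigma=\min\{1/\alpha,1/(1-\alpha)\}$, then invoke Theorem~3 of \cite{N-Z} (for the second-order operator $-\Delta$) to pass from $G(A,2s)$ to $G^s(\Omega)$, yielding $s=\sigma/2$. The additional details you supply (the Stirling conversions, the remark on non-coercivity via $G(I+A,\sigma)=G(A,\sigma)$, and the heuristic for the factor $1/2$) are all correct but go beyond what the paper spells out; the paper's own proof is the two-line version of exactly this argument.
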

 
\begin{proof}  By Theorem 3 of \cite{N-Z}, when $A $ is a second order elliptic operator, we have $ G(A, 2s)\subset G^s(\Omega)$. Therefore the result is an immediate consequence of Theorem \ref{mainth} .\end{proof}

\section{Optimality results}

The next two results provide a very strong optimality statement for Theorem \ref{mainth}.  

\begin{Thm}\label{opt} Let $A$ be coercive with $A^{-1}$ compact and assume the two following conditions 

i) For some $\varepsilon>0, \delta>0$ we have $$\forall n\ge 1, \quad \lambda_n\ge \delta n^\varepsilon. $$ 

ii) For some $C>1$  we have $$\forall n\ge 1, \quad \lambda_{n+1} \le C\lambda_n. $$ 
Assume $\alpha \in (1/2, 1).$ Then there is a solution of  \eqref{main} with initial data $u(0) = u_0, u'(0) = u_1$ in $V\times H$ for which we have 
$$ \forall   k\ge 1,  \forall t>0, \quad  |A^k u(t)| \ge  [\delta(t)k]^ {\frac{k}{1-\alpha}} $$
where $\delta(t)>0$ for all $t>0.$ 

\end{Thm}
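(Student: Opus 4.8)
The plan is to construct the desired solution explicitly by a single mode, and then upgrade to a carefully chosen superposition of modes so that the lower bound $|A^k u(t)| \ge [\delta(t)k]^{k/(1-\alpha)}$ cannot be beaten. On an eigenfunction $\varphi_n$ with $A\varphi_n = \lambda_n \varphi_n$, equation \eqref{main} reduces to the scalar ODE $w'' + \lambda_n w + c\lambda_n^\alpha w' = 0$, whose characteristic roots are $r_n^{\pm} = \frac{1}{2}\bigl(-c\lambda_n^\alpha \pm \sqrt{c^2\lambda_n^{2\alpha} - 4\lambda_n}\bigr)$. Since $\alpha \in (1/2,1)$, for $n$ large we have $c^2\lambda_n^{2\alpha} \gg 4\lambda_n$, so both roots are real and negative, and the root closest to $0$ behaves like $r_n^{-} \sim -\lambda_n^{1-\alpha}/c$ as $n\to\infty$ (expand the square root: $\sqrt{c^2\lambda_n^{2\alpha}-4\lambda_n} = c\lambda_n^\alpha\sqrt{1 - 4\lambda_n^{1-2\alpha}/c^2} = c\lambda_n^\alpha - \frac{2}{c}\lambda_n^{1-\alpha} + \cdots$). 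So the "slow" mode decays like $e^{-t\lambda_n^{1-\alpha}/c}$. First I would pin down, via condition (ii), that the ratios $\lambda_{n+1}/\lambda_n$ stay bounded, so that the $r_n^-$ do not have arbitrarily large gaps, and via condition (i) that $\lambda_n \to \infty$ at a polynomial rate, which controls how the exponentials compete.

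Next I would choose coefficients $a_n$ and set $u(t) = \sum_n a_n\, e^{r_n^- t}\varphi_n$, with the $a_n$ decaying just fast enough that $(u_0,u_1)=(u(0),u'(0)) \in V\times H$ — this requires $\sum_n a_n^2 \lambda_n < \infty$, so e.g. $a_n = \lambda_n^{-1}$ (up to the finitely many small eigenvalues where the roots may be complex, which one handles separately since they contribute only a smooth, harmless piece). Then $|A^k u(t)|^2 = \sum_n a_n^2 \lambda_n^{2k} e^{2r_n^- t}$, and the key estimate is a lower bound: restrict the sum to a single well-chosen index $n = n(k,t)$ and get $|A^k u(t)| \ge |a_{n}|\,\lambda_{n}^{k}\, e^{r_{n}^- t}$. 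Using $r_n^- \sim -\lambda_n^{1-\alpha}/c$, I want to choose $n$ so that $\lambda_n$ is roughly of order $(k/t)^{1/(1-\alpha)}$, which makes $\lambda_n^k e^{r_n^- t}$ of order $e^{c' k}\cdot(k/t)^{k/(1-\alpha)}$ for a suitable constant — more precisely, maximizing $x \mapsto k\log x - (t/c)x^{1-\alpha}$ over $x = \lambda_n$ gives the optimizer $x^{1-\alpha} = ck/((1-\alpha)t)$, and the maximal value is $\frac{k}{1-\alpha}\log\bigl(ck/((1-\alpha)t)\bigr) - \frac{k}{1-\alpha}$, i.e. of the form $\frac{k}{1-\alpha}\log(\delta(t)k)$ with $\delta(t) = c e^{-1}/((1-\alpha)t)$ up to lower-order corrections. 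Condition (ii) guarantees that some actual eigenvalue $\lambda_n$ lies within a bounded factor of this optimizer (no large gaps), so plugging that $n$ in and absorbing the factor $|a_n| = \lambda_n^{-1}$ and the gap factor into a slightly smaller $\delta(t)$ finishes the lower bound for all $k \ge 1$; for small $k$ and small $t$ one simply shrinks $\delta(t)$ further.

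The main obstacle will be bookkeeping the discreteness of the spectrum: the continuous optimizer $x^* = (ck/((1-\alpha)t))^{1/(1-\alpha)}$ need not be an eigenvalue, so I must use condition (ii) to find $n$ with $x^* \le \lambda_n \le C x^*$ (or $\lambda_{n-1} \le x^* \le \lambda_n$), show that evaluating the single-mode bound at this $\lambda_n$ rather than at $x^*$ costs only a factor $C^{k/(1-\alpha)}$ — harmless, as it just rescales $\delta(t)$ — and simultaneously check that the correction terms in $r_n^- = -\lambda_n^{1-\alpha}/c + O(\lambda_n^{1-2\alpha})$ are genuinely lower order along this family (here condition (i), forcing $\lambda_n \to\infty$, is exactly what makes the remainder negligible compared to the main term since $1-2\alpha < 1-\alpha$). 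A secondary, more routine point is to confirm that $(u_0,u_1)$ genuinely lies in $V\times H$ and that $u$ is the unique solution of \eqref{main} with that data, and to make sure the finitely many exceptional (possibly oscillatory) low modes are either excluded from the sum or verified not to destroy the lower bound — both are straightforward once the main mode-counting argument is in place.
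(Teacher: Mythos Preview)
Your approach is essentially the same as the paper's: build $u$ as a superposition $\sum_{n\ge n_0} c_n e^{-\mu_n t}\varphi_n$ using only the slow root $\mu_n\asymp \lambda_n^{1-\alpha}$, use orthogonality to bound $|A^k u(t)|$ from below by a single term, and pick the index $n=n(k)$ via condition~(ii) so that $\lambda_n$ sits within a bounded factor of the continuous optimizer. The paper makes the slightly cruder choice $\lambda_n\approx k^{1/(1-\alpha)}$ (no $t$ in the target), absorbing the $t$-dependence entirely into $\delta(t)$ via the bound $e^{-\mu_n t}\ge e^{-2C^{1-\alpha}kt}$; your sharper choice $\lambda_n\approx (k/t)^{1/(1-\alpha)}$ is fine too and leads to the same conclusion.

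One small correction: your example $a_n=\lambda_n^{-1}$ gives $\sum a_n^2\lambda_n=\sum\lambda_n^{-1}$, which under hypothesis~(i) converges only when $\varepsilon>1$, whereas the theorem allows any $\varepsilon>0$. The paper takes $c_n=\lambda_n^{-K}$ with $K>1+\tfrac{1}{2\varepsilon}$ to guarantee $\sum c_n^2\lambda_n<\infty$ in all cases; you should do the same (the extra factor $\lambda_n^{-K}$ is then absorbed harmlessly, as you already anticipated for $\lambda_n^{-1}$).
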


\begin{proof}  Assuming  ${\lambda_{n_0}}^{2\alpha-1}> 4$ we look for a solution of \eqref{main} of the form $$ u(t) = \sum_{n\ge n_0} c_n e^{-\mu_n t} \varphi_n $$ with $\varphi_n$ a sequence of normalized eigenvectors corresponding to the eigenvalues $\lambda_n$ and $$ \mu_n: = \frac{\lambda_n^\alpha}{2}- \sqrt{\frac{\lambda_n^{2\alpha}}{4}- \lambda_n}= \frac {\lambda_n}{\frac{\lambda_n^\alpha}{2}+ \sqrt{\frac{\lambda_n^{2\alpha}}{4}- \lambda_n}}. $$ 
We observe that for all $n$ $$\lambda_n^{1-\alpha} \le \mu_n \le 2\lambda_n^{1-\alpha}. $$ 
It is clear that $u$ is indeed  a solution of \eqref{main} if the coefficients $c_n$ tend to $0$ fast enough when $n$ grows to infinity. A sufficient condition for that is $$c_n = \lambda_n^{-K};\quad K> 1 +\frac{1}{2\varepsilon}.$$ 
Now for all k  we have $$A^k u(t) = \sum_{n\ge n_0} c_n\lambda_n^k e^{-\mu_n t} \varphi_n  $$  and as a consequence of orthogonality of the eigenvectors in $H$ we obtain 
$$ \forall   n\ge n_0,  \forall t>0, \quad  |A^k u(t)| \ge c_n \lambda_n^k e^{-\mu_n t}.$$ 
In that inequality we choose 
$$ n= \inf \{m\ge n_0, \lambda_m \ge k^{\frac{1}{1-\alpha}}\}. $$ 
Then for $k$ large enough we must have $n>n_0$. In this case $ \lambda_{n-1}< k^{\frac{1}{1-\alpha}}$ and then $ \lambda_n \le Ck^{\frac{1}{1-\alpha}}$. It follows that  $$ c_n \lambda_n^k e^{-\mu_n t} \ge \lambda_n^{-K} k^{\frac{k}{1-\alpha}} e^{-2C^{1-\alpha} kt }\ge C^{-K}k^{-\frac{K}{1-\alpha}}e^{-2C^{1-\alpha} kt }k^{\frac{k}{1-\alpha}}.$$ This concludes the proof. 
\end{proof} 

\begin{Thm}\label{opt2} Let $A$ be as in the statement of Theorem \ref{opt}. Assume $\alpha \in (0, 1/2).$ Then there is a solution of  \eqref{main} with initial data $u(0) = u_0, u'(0) = u_1$ in $V\times H$ for which we have 
$$ \forall   k\ge n_0,  \forall t>0, \forall \theta>t, \quad  \int _t^{\theta} |A^k u(s)|^2ds  \ge  [\delta(t, s)k]^ {\frac{2k}{\alpha}}, $$
where $\delta(t,s)>0$. 

\end{Thm}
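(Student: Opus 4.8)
\emph{The plan} is to follow the scheme of the proof of Theorem \ref{opt}, with the crucial difference that for $\alpha<1/2$ the characteristic roots attached to a large eigenvalue become complex, so the eigen-modes oscillate instead of decaying monotonically; this is precisely why the statement concerns the time-average $\int_t^\theta|A^ku(s)|^2\,ds$ rather than a pointwise quantity. Since $\alpha\neq\frac12$ we first use the scaling remark of the introduction to normalise $c=1$. As $\lambda_n^{2\alpha-1}\to0$, we may fix $n_0$ so large that $\lambda_n^{2\alpha-1}\le1$ for all $n\ge n_0$; then for every such $n$ the equation $w''+\lambda_n w+\lambda_n^\alpha w'=0$ has the conjugate roots $-\frac{\lambda_n^\alpha}{2}\pm i\omega_n$, where $\omega_n:=\sqrt{\lambda_n-\frac{\lambda_n^{2\alpha}}{4}}\ge\frac12\sqrt{\lambda_n}$. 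We look for a solution of the form
$$ u(t)=\sum_{n\ge n_0}c_n\,e^{-\lambda_n^\alpha t/2}\cos(\omega_n t)\,\varphi_n,\qquad c_n=\lambda_n^{-K},\quad K>1+\tfrac{1}{2\varepsilon}, $$
with $\{\varphi_n\}$ normalised eigenvectors, so that $u(0)=\sum c_n\varphi_n$ and $u'(0)=-\tfrac12\sum c_n\lambda_n^\alpha\varphi_n$. Just as in Theorem \ref{opt}, condition i) and the choice of $K$ guarantee that this series converges to a genuine solution of \eqref{main} with $(u(0),u'(0))\in V\times H$.

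For the lower bound, fix $0<t<\theta$. By orthonormality of the $\varphi_n$ in $H$, for every single $n\ge n_0$,
$$ \int_t^\theta|A^ku(s)|^2\,ds=\sum_{m\ge n_0}c_m^2\lambda_m^{2k}\int_t^\theta e^{-\lambda_m^\alpha s}\cos^2(\omega_m s)\,ds\ \ge\ c_n^2\lambda_n^{2k}\,e^{-\lambda_n^\alpha\theta}\int_t^\theta\cos^2(\omega_n s)\,ds. $$
Since $\int_t^\theta\cos^2(\omega_n s)\,ds=\frac{\theta-t}{2}+\frac{\sin 2\omega_n\theta-\sin 2\omega_n t}{4\omega_n}\ge\frac{\theta-t}{2}-\frac{1}{2\omega_n}$ and $\omega_n\ge\frac12\sqrt{\lambda_n}\to\infty$, there is $n_1\ge n_0$ with $\int_t^\theta\cos^2(\omega_n s)\,ds\ge\frac{\theta-t}{4}$ for all $n\ge n_1$.

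Now, for $k$ large, set $n=n(k):=\inf\{m\ge n_0:\lambda_m\ge k^{1/\alpha}\}$; then $n>n_1$, $\lambda_{n-1}<k^{1/\alpha}$, and condition ii) gives $k^{1/\alpha}\le\lambda_n\le Ck^{1/\alpha}$, i.e. $k\le\lambda_n^\alpha\le C^\alpha k$. Hence $c_n^2\lambda_n^{2k}=\lambda_n^{2k}\lambda_n^{-2K}\ge (k^{1/\alpha})^{2k}(Ck^{1/\alpha})^{-2K}=C^{-2K}k^{-2K/\alpha}\,k^{2k/\alpha}$ and $e^{-\lambda_n^\alpha\theta}\ge e^{-C^\alpha k\theta}$, so that
$$ \int_t^\theta|A^ku(s)|^2\,ds\ \ge\ C^{-2K}k^{-2K/\alpha}\,e^{-C^\alpha k\theta}\,\tfrac{\theta-t}{4}\;k^{2k/\alpha}. $$
Raising the prefactor $C^{-2K}k^{-2K/\alpha}e^{-C^\alpha k\theta}\frac{\theta-t}{4}$ to the power $\frac{\alpha}{2k}$, it tends to $e^{-C^\alpha\alpha\theta/2}>0$ as $k\to\infty$; hence there are $\delta_0(t,\theta)>0$ and $k_0\ge n_0$ with $\int_t^\theta|A^ku(s)|^2\,ds\ge[\delta_0(t,\theta)\,k]^{2k/\alpha}$ for all $k\ge k_0$. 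For each of the finitely many $k\in\{n_0,\dots,k_0-1\}$ the left-hand side is a fixed positive number (it dominates the $m=n_0$ term, whose integrand is nonnegative and not identically zero on $[t,\theta]$), so shrinking $\delta_0(t,\theta)$ to a suitable $\delta(t,\theta)>0$ yields the inequality for all $k\ge n_0$, which is the claim.

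\emph{The main obstacle} is precisely the oscillation: for $\alpha<1/2$ every eigen-component vanishes at infinitely many instants, so no pointwise lower bound on $|A^ku(t)|$ is available and one must pass to $\int_t^\theta|A^ku|^2$; the one delicate point is the lower bound for $\int_t^\theta\cos^2(\omega_n s)\,ds$, which works because $\omega_n\to\infty$. The remaining bookkeeping is that of Theorem \ref{opt}, the essential mechanism being the choice $\lambda_{n(k)}^\alpha\asymp k$ — so that $\lambda_{n(k)}^{2k}\asymp k^{2k/\alpha}$ while the parasitic factor $e^{-\lambda_{n(k)}^\alpha\theta}$ stays of the form $(\mathrm{const})^k$ — which is where hypothesis ii) is used; hypothesis i) serves only to make the series an actual solution in the energy space.
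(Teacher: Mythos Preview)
Your proof is correct and follows essentially the same approach as the paper's: the same oscillatory eigen-expansion with $c_n=\lambda_n^{-K}$, the same choice $n(k)=\inf\{m:\lambda_m\ge k^{1/\alpha}\}$, and the same use of hypothesis ii) to pin $\lambda_{n(k)}\in[k^{1/\alpha},Ck^{1/\alpha}]$. The paper in fact writes only the pointwise inequality and then says ``the end of the proof is quite similar\ldots the only difference being integration in $t$\ldots and the remark that the integral of $\cos^2(\cdot)$ on any interval $J$ tends to $|J|/2$; we skip the details''---you have supplied exactly those details, including the explicit bound $\int_t^\theta\cos^2(\omega_n s)\,ds\ge\tfrac{\theta-t}{2}-\tfrac{1}{2\omega_n}$ and the clean endgame via the $\alpha/(2k)$-th root of the prefactor.
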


\begin{proof}  Assuming ${\lambda_{n_0}}^{1-2\alpha}> 1/4$, we shall find find a solution of \eqref{main} of the form $$ u(t) = \sum_{n\ge n_0} c_n e^{-\frac{\lambda_n^\alpha}{2}t} \cos \left(t \sqrt{\lambda_n- \frac{\lambda_n^{2\alpha}}{4}}\right)\varphi_n $$ with $\varphi_n$ a sequence of normalized eigenvectors corresponding to the eigenvalues $\lambda_n$.  
It is clear that $u$ is indeed  a solution of \eqref{main} if the coefficients $c_n$ tend to $0$ fast enough when $n$ grows to infinity. A sufficient condition for that is $$c_n = \lambda_n^{-K};\quad K> 1 +\frac{1}{2\varepsilon}.$$ 
Now for all k  we have $$A^k u(t) = \sum_{n\ge n_0} c_n \lambda_n^k e^{-\frac{\lambda_n^\alpha}{2}t} \cos \left(t \sqrt{\lambda_n- \frac{\lambda_n^{2\alpha}}{4}}\right)\varphi_n   $$  and as a consequence of orthogonality of the eigenvectors in $H$ we obtain 
$$ \forall   n\ge n_0,  \forall t>0, \quad  |A^k u(t)|^2 \ge c_n^2 \lambda_n^{2k} e^{-\lambda_n^\alpha  t} \cos^2 \left(t \sqrt{\lambda_n- \frac{\lambda_n^{2\alpha}}{4}}\right).$$ 
In that inequality we choose 
$$ n= \inf \{m\ge n_0, \lambda_m \ge k^{\frac{1}{\alpha}}\}. $$ 
Then for $k$ large enough we must have $n>n_0$. In this case $ \lambda_{n-1}< k^{\frac{1}{\alpha}}$ and then $ \lambda_n \le Ck^{\frac{1}{\alpha}}$. The end of the proof is now quite similar to the proof of the previous result, the only difference being integration in t to handle the oscillating term and the remark that  the integral of the function $\cos^2 \left(t \sqrt{\lambda_n- \frac{\lambda_n^{2\alpha}}{4}}\right)$ on any time interval $J$ tends to $|J| /2$ as $n$ tends to infinity. We skip the details.
\end{proof} 

\begin{Rmk}\label{opt3} Let $A$ be as in the statement of Theorem \ref{opt}. Assume $\alpha =1/2.$ Then there is a solution of  \eqref{main} with initial data $u(0) = u_0, u'(0) = u_1$ in $V\times H$ for which we have 
$$ \forall   k\ge n_0,  \forall t>0, \forall \theta>t, \quad  \int _t^{\theta} |A^k u(s)|^2ds  \ge  [\delta(t, s)k]^ {4k}, $$
where $\delta(t,s)>0$. The proof follows the line of proof of either Theorem \ref{opt} if $c\ge 2$ or Theorem \ref{opt2} if $c<2$ . We skip the details.

\end{Rmk}

\begin{Cor}\label{opt3} Let $\Omega$ be a bounded interval of $\mathbb{R}$ and $H= L^2(\Omega); V = H^1_0(\Omega).$ Then for any $r<s=\min \{ \frac {1}{2\alpha}, \frac {1}{2(1-\alpha)}\}$ given by Theorem \ref{wave}, there is a solution of  \eqref{main} with initial data $u(0) = u_0, u'(0) = u_1$ in $V\times H$ for which $u(t)$ never belongs to $G^r(\Omega)$. In particular the solutions are not analytic in general for $t>0$ if $\alpha\not= 1/2$.\end{Cor}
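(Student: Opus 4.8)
\section*{Proof proposal for Corollary \ref{opt3}}

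The plan is to feed the abstract optimality statements of Theorems \ref{opt} and \ref{opt2} into the concrete case $A=-\Delta$ with Dirichlet conditions and then to convert the resulting lower bounds for $|A^ku(t)|$ into lower bounds for $\|\partial_x^{2k}u(t)\|_{L^\infty(K_0)}$ on a \emph{fixed} compact subinterval $K_0\Subset\Omega$. First I would check the hypotheses: writing $L$ for the length of $\Omega$, the Dirichlet Laplacian has compact inverse and eigenvalues $\lambda_n=(n\pi/L)^2$, so condition i) of Theorem \ref{opt} holds with $\varepsilon=2$ and condition ii) with $C=4$; hence Theorems \ref{opt}, \ref{opt2} and the analogous statement for $\alpha=\tfrac12$ all apply. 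I shall describe $\alpha\in(\tfrac12,1)$ in detail, the range $\alpha<\tfrac12$ being identical upon using Theorem \ref{opt2} and one extra integration in $t$ to absorb the oscillating factor, and $\alpha=\tfrac12$ being a variant in which the constant $c$ survives in the estimates. These results furnish a solution $u$ whose spatial profile is a Dirichlet sine series $\sum c_n e^{-\mu_n t}\varphi_n$ with $c_n=\lambda_n^{-K}$ and $|A^ku(t)|\ge[\delta(t)k]^{k/(1-\alpha)}=[\delta(t)k]^{2sk}$ for all $t>0$ and all large $k$, where $2s=\tfrac1{1-\alpha}$.

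The obstacle is that $u(t)\in G^s(\Omega)$ is a purely local condition, bounding only $\|D^pu(t)\|_{L^\infty(K)}$ on compacts $K\Subset\Omega$, while the inequality above is global; worse, the solutions produced by Theorems \ref{opt}--\ref{opt2} are actually analytic at every interior point, their roughness being confined to an endpoint (the band of Fourier modes near the maximizing index $n^\ast\sim k^{1/(2(1-\alpha))}$ that carries the lower bound forms an $x$-wave packet of width $O(\sqrt k/n^\ast)=o(1)$ that, as $k\to\infty$, concentrates in a shrinking boundary layer at $x=a$). To repair this I would not use that solution verbatim but the one with coefficients $\tilde c_n=2c_n\cos(n\pi\rho_0/L)$ for a fixed $\rho_0\in(0,L)$ with $2\rho_0\notin L\mathbb{Z}$; on the $2L$-periodic odd extension its profile equals $g(t,x-\rho_0)+g(t,x+\rho_0)$, where $g$ is the profile of the original solution, which makes transparent that it is rough precisely at the two \emph{interior} points $x_0=a+\rho_0$ and $a+(L-\rho_0)$. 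Since $|\tilde c_n|\le 2|c_n|$ this is again a solution of \eqref{main} with data in $V\times H$, and translating the wave packet of $g$ from $a$ to $x_0$ does not affect the lower bound coming from Theorem \ref{opt}.

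The heart of the proof is then the localized estimate: for $K_0=[x_0-\eta,x_0+\eta]\Subset\Omega$ with $\eta$ small enough to exclude $a+(L-\rho_0)$, one should have $\|A^ku(t)\|_{L^2(K_0)}\ge c(t)\,[\delta'(t)k]^{2sk}$ for all large $k$, because the dominant band of modes is a narrow wave packet centred at $x_0$, so its contribution to $\|A^ku(t)\|_{L^2(K_0)}$ is a definite fraction of its contribution to $|A^ku(t)|$ (this one gets either from the near-Gaussian concentration of $c_n\lambda_n^ke^{-\mu_n t}$ about $n^\ast$, or from an almost-orthogonality estimate showing $\langle\varphi_m,\varphi_n\rangle_{L^2(K_0)}$ is essentially $\delta_{mn}$ on that band), while the second summand contributes only $O(C(t)^k)$ on $K_0$. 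Conversely, if $u(t)\in G^r(\Omega)$ for some $r<s$, then since $-\Delta$ is a second order operator $A^k=(-1)^k\partial_x^{2k}$ is a differential monomial of degree $2k$, so the Gevrey bound gives $\|A^ku(t)\|_{L^\infty(K_0)}\le R^{2k}(2k)^{2rk}$ and hence $\|A^ku(t)\|_{L^2(K_0)}\le C^k k^{2rk}$. Comparing the two, $c(t)[\delta'(t)k]^{2sk}\le C^k k^{2rk}$, i.e. $k^{2(s-r)k}\le D(t)^k$, which is impossible for large $k$ since $s>r$; this contradiction proves $u(t)\notin G^r(\Omega)$ for every $t>0$. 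When $s>1$, that is for $\alpha\neq\tfrac12$, one may take $r=1<s$ and conclude that $u(t)$ is not real-analytic in $\Omega$. I expect the localization step --- making rigorous that the $k$-dependent rough band of modes is genuinely concentrated, in space, about the fixed interior point $x_0$ --- to be the main difficulty; the remaining computations are routine Stirling-type bookkeeping.
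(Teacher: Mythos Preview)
Your overall strategy --- localize the global lower bound $|A^ku(t)|_{L^2(\Omega)}$ to a fixed compact $K_0\Subset\Omega$ and then compare with the Gevrey upper bound $\|\partial_x^{2k}u(t)\|_{L^\infty(K_0)}\le R^{2k}(2k)^{2rk}$ --- is exactly right, and you have correctly identified the localization as the crux. But your diagnosis of the obstruction is wrong, and the fix you build on it does not close the gap. The solutions of Theorems~\ref{opt}--\ref{opt2} are \emph{not} analytic at interior points with roughness ``confined to an endpoint'': a sine series whose coefficients decay like $n^{-2K}e^{-ctn^{2(1-\alpha)}}$ (sub-exponentially for $\alpha\ne\tfrac12$) is genuinely Gevrey-$s$ and non-analytic at \emph{every} point, since the Gevrey order of a periodic function is read off globally from Fourier decay. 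Likewise, a narrow band of modes $\sin(n\pi x/L)$ around $n^\ast$ is an equidistributed high-frequency oscillation, not a spatially concentrated wave packet sliding to the boundary. So the cosine-modulation $\tilde c_n=2c_n\cos(n\pi\rho_0/L)$ is unnecessary, and after it you are still left with the ``main difficulty'' you yourself flag --- an almost-orthogonality or concentration estimate on $K_0$ --- for which you offer only a heuristic.

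The paper bypasses all of this with a one-line device. After rescaling to $\Omega=(0,3\pi)$, it constructs the bad solution using only every third Dirichlet eigenfunction, namely $\{\sin(mx)\}_{m\ge m_0}$. These functions are simultaneously Dirichlet eigenfunctions on each of the sub-intervals $(0,\pi)$, $(\pi,2\pi)$, $(2\pi,3\pi)$ and remain mutually \emph{orthogonal} there; hence for the compactly contained middle third one has exactly
\[
\|A^ku(t)\|_{L^2(\pi,2\pi)}^2=\tfrac13\,\|A^ku(t)\|_{L^2(0,3\pi)}^2,
\]
so the lower bounds from Theorems~\ref{opt}--\ref{opt2} transfer verbatim to a fixed interior compact with no concentration argument at all. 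Your final comparison paragraph then applies unchanged. The moral: instead of trying to prove that a generic eigenfunction expansion does not cancel on $K_0$, choose a sub-lattice of eigenfunctions that is already orthogonal on $K_0$.
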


\begin{proof} By a translation and a space-scaling we can reduce the question to the case $ \Omega = (0, 3\pi)$ and show that for some solutions, the Gevrey estimates in the interior subset  $ \omega = (0, \pi)$  are not better than those of the general theorems. The eigenfunctions of the Dirichlet-Laplacian in  $ \Omega = (0, 3\pi)$ are the functions $\sin \frac{kx}{3}$. We choose the solutions with initial data spanned by the functions  $\sin (mx) $ only, restricting ourselves to $k = 3m$. These solutions satisfy the same equation in  $ \omega = (0, \pi)$  with homogeneous Dirichlet boundary conditions, and here the estimates of successive derivatives correspond exactly to the double exponents for the same powers of the Laplacian. Therefore the examples constructed in The two previous theorems and the remark provide solutions having in $ \omega = (0, \pi)$ for $t>0$ the exact Gevrey regularity allowed by Theorem \ref{wave}, and not more.\end{proof}

\section{Other examples and possible extensions}

The general Theorems apply also to plate (beam in 1D) equations, either  clamped or simply supported. 

\begin{Thm}\label{CPlate} Let $\Omega$ be any open subset of $\mathbb{R}^N$ and $H= L^2(\Omega); V = H^2_0(\Omega)$ 
Let $A = -\Delta$ with domain $D(A) = \{ w\in V, \Delta v\in H\}$ For any $\alpha\in (0,1)$ and any  $(u_0, u_1) \in V\times H$, the unique solution $u$ of \eqref{main} with initial data $u(0) = u_0, \quad u'(0) = u_1$ satisfies 
\begin{equation}\label{mainrs2}
 \forall t>0, \quad u(t) \in G^s(\Omega) \end{equation} with \begin {equation} \label{mainrs3}
 s = \min \{ \frac {1}{4\alpha}, \frac {1}{4(1-\alpha)}\} . \end{equation}
 In particular for all $\alpha \in [1/4, 3/4] $, $u(t)$ is analytic inside  $\Omega$ for all $t>0$. 
 \end{Thm}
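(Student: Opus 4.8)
The plan is to reduce Theorem~\ref{CPlate} to the abstract Theorem~\ref{mainth} exactly as Theorem~\ref{wave} was deduced, the only change being that the relevant operator $A = -\Delta$ is now equipped with the boundary conditions of the clamped plate, i.e. $V = H^2_0(\Omega)$, so that the elliptic operator governing the interior regularity is of \emph{fourth} order rather than second. Concretely, set $B = A^2 = \Delta^2$ acting on $H = L^2(\Omega)$ with the clamped conditions; then \eqref{main} for $A$ with $\alpha$ can be rewritten, after noting $A^\alpha = B^{\alpha/2}$ and $A = B^{1/2}$, and the initial data $(u_0,u_1)\in V\times H = D(B^{1/2})\times H$ is precisely the energy space for $B$. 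Theorem~\ref{mainth} applied to $B$ with exponent $\beta := \alpha$ (since $A^\alpha u' = B^{\beta/2}u'$ — here one must be slightly careful, see below) yields $u(t)\in G(B,\sigma)$ for $t>0$ with $\sigma = \min\{1/\beta,\,1/(1-\beta)\}$, and then $G(B,\sigma) = G(A^2,\sigma) = G(A,2\sigma)$ by Remark~2.5 of the excerpt ($G(A,s) = G(A^\alpha,\alpha s)$ with $\alpha = 2$, so $G(A^2,\sigma) = G(A,2\sigma)$).

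The one genuine bookkeeping point — and I expect it to be the main obstacle, or at least the only place where care is needed — is matching the fractional-power exponent correctly. In \eqref{main} the damping is $cA^\alpha u'$; writing everything in terms of $B = A^2$ we have $A = B^{1/2}$ so $Au = B^{1/2}u$, but the standard form of \eqref{main} associated to $B$ would read $u'' + Bu + \tilde c B^{\tilde\alpha}u' = 0$, which does \emph{not} match $u'' + B^{1/2}u + cB^{\alpha/2}u' = 0$ because the ``stiffness'' term is $B^{1/2}$ rather than $B$. So Theorem~\ref{mainth} as stated does not directly apply to $B$. The clean fix is \emph{not} to change operators but simply to apply Theorem~\ref{mainth} verbatim to $A = -\Delta$ with $V = H^2_0(\Omega)$, obtaining $u(t)\in G(A,\sigma)$, $\sigma = \min\{1/\alpha,1/(1-\alpha)\}$, for all $t>0$; this step is purely abstract and uses nothing about the concrete realization of $A$. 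The PDE input enters only at the final step, where one invokes Theorem~3 of~\cite{N-Z}: for the \emph{fourth}-order elliptic operator $\Delta^2$ (equivalently, interpreting Gevrey estimates for powers of $A=-\Delta$ as estimates for powers of the square root of $\Delta^2$), the analogue of the inclusion used in Theorem~\ref{wave} reads $G(A,2s)\subset G^s(\Omega)$ with the factor $2$ replaced by $4$, i.e. $G(A, 4s)\subset G^s(\Omega)$, because a bound on $|A^n u| = |\Delta^n u|$ controls $4n$ spatial derivatives rather than $2n$.

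Putting the pieces together: from $u(t)\in G(A,\sigma)$ and the inclusion $G(A,4s)\subset G^s(\Omega)$ one gets $u(t)\in G^{\sigma/4}(\Omega)$, and $\sigma/4 = \min\{1/(4\alpha),\,1/(4(1-\alpha))\}$, which is exactly \eqref{mainrs3}. The final sentence of the statement is then immediate: $s\le 1$, i.e. $u(t)$ analytic, holds precisely when $\min\{1/(4\alpha),1/(4(1-\alpha))\}\le 1$ fails to be $<1$ — more precisely analyticity corresponds to $\sigma/4 \le 1$ together with the endpoint case $\alpha=1/2$ giving $\sigma = 2$ hence $s = 1/2$; working out the inequality $\alpha\le 3/4$ and $1-\alpha\le 3/4$ simultaneously gives $\alpha\in[1/4,3/4]$. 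The only thing to verify with care is that the domain specification $D(A) = \{w\in V,\ \Delta w\in H\}$ with $V = H^2_0(\Omega)$ indeed produces a positive self-adjoint $A$ whose powers are the ones to which~\cite{N-Z} applies; this is standard for the clamped plate and I would state it as such, citing the same mechanism as in Theorem~\ref{wave}. I would therefore write the proof in two lines: ``Apply Theorem~\ref{mainth} to $A$; then invoke Theorem~3 of~\cite{N-Z} in the form $G(A,4s)\subset G^s(\Omega)$ valid since $A^n$ controls $4n$ derivatives of a function annihilated by a fourth-order elliptic operator, and conclude.''
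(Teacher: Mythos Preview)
Your final two-line proof is exactly the paper's argument: apply Theorem~\ref{mainth} to get $u(t)\in G(A,\sigma)$ with $\sigma=\min\{1/\alpha,1/(1-\alpha)\}$, then invoke Theorem~3 of~\cite{N-Z} for a fourth-order elliptic operator in the form $G(A,4s)\subset G^s(\Omega)$ to obtain $s=\sigma/4$. Your detour through $B=A^2$ is unnecessary and, as you yourself noticed, does not fit the hypotheses of Theorem~\ref{mainth}; the paper's own proof makes clear that the intended operator is the bilaplacian (so that ``$A=-\Delta$'' in the statement is a typo for $A=\Delta^2$), and once you read it that way your argument and the paper's coincide verbatim.
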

 
 \begin{proof}  By Theorem 3 of \cite{N-Z}, when $A $ is a fourth order elliptic operator, we have $ G(A, 4s)\subset G^s(\Omega)$. Therefore the result is an immediate consequence of Theorem \ref{mainth} .\end{proof} 
 
 \begin{Thm}\label{SSPlate} Let $\Omega$ be any open subset of $\mathbb{R}^N$ and $H= L^2(\Omega); V = H^2\cap H^1_0(\Omega)$ 
Let $A = -\Delta$ with domain $D(A) = \{ w\in V, \Delta v\in H\}$ For any $\alpha\in (0,1)$ and any  $(u_0, u_1) \in V\times H$, the unique solution $u$ of \eqref{main} with initial data $u(0) = u_0, \quad u'(0) = u_1$ satisfies \eqref{mainrs2} and \eqref{mainrs3}.
 In particular for all $\alpha \in [1/4, 3/4] $, $u(t)$ is analytic inside  $\Omega$ for all $t>0$. 
 \end{Thm}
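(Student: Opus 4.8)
\textbf{Proof proposal for Theorem \ref{SSPlate}.}

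The plan is to reduce this statement to Theorem \ref{mainth} in exactly the same way as Theorems \ref{wave} and \ref{CPlate}, the only point requiring attention being the identification of the relevant operator and the verification that the hypotheses of Theorem 3 of \cite{N-Z} apply in the simply supported case. First I would set $A_0 = -\Delta$ with domain $D(A_0) = \{ w\in H^1_0(\Omega), \Delta w \in L^2(\Omega)\}$, which is a positive self-adjoint operator on $H = L^2(\Omega)$, and then observe that the operator $A$ appearing in equation \eqref{main} here is $A = A_0^2$, i.e. the bi-Laplacian $\Delta^2$ realized with the simply supported (Navier) boundary conditions $u = \Delta u = 0$ on $\partial\Omega$; its form domain is $V = H^2(\Omega)\cap H^1_0(\Omega)$ as stated. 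This is a fourth order elliptic operator with constant (hence analytic) coefficients, so Theorem 3 of \cite{N-Z} yields the inclusion $G(A, 4s) \subset G^s(\Omega)$ for every $s>0$.

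Next I would apply Theorem \ref{mainth} directly to equation \eqref{main} with this $A$ and the given initial data $(u_0,u_1) \in V\times H$: the unique solution $u$ satisfies $u(t) \in G(A,\sigma)$ for all $t>0$ with $\sigma = \min\{\frac{1}{\alpha}, \frac{1}{1-\alpha}\}$. Taking $s$ so that $4s = \sigma$, that is $s = \min\{\frac{1}{4\alpha}, \frac{1}{4(1-\alpha)}\}$, the inclusion $G(A,\sigma) = G(A, 4s) \subset G^s(\Omega)$ from the previous paragraph gives $u(t)\in G^s(\Omega)$ for all $t>0$, which is precisely \eqref{mainrs2} with the exponent \eqref{mainrs3}. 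The final assertion about analyticity is then immediate: $s \le 1$ exactly when $\min\{\frac{1}{4\alpha},\frac{1}{4(1-\alpha)}\}\le 1$, i.e. when $\alpha \ge 1/4$ and $1-\alpha \ge 1/4$, that is $\alpha\in[1/4,3/4]$, and for such $\alpha$ the class $G^s(\Omega)$ with $s\le 1$ consists of real-analytic functions in $\Omega$.

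The only genuinely substantive point — and hence the main obstacle — is checking that Theorem 3 of \cite{N-Z} is applicable to $A = \Delta^2$ with \emph{simply supported} boundary conditions exactly as it was applied to the clamped plate in Theorem \ref{CPlate}. The cited theorem connects local ultradifferentiability of a function to Gevrey-type bounds on $|A^n u|$, and since it is an interior regularity statement, only the ellipticity and the analyticity of the coefficients of the differential operator matter, not the boundary conditions under which $A$ is realized as a self-adjoint operator; the boundary conditions affect only the domain of $A$ (which is already encoded in the hypothesis $u(t)\in D(A^n)$ for all $n$, supplied by Theorem \ref{mainth}). Thus the argument is literally identical to that of Theorem \ref{CPlate}, and no new estimate is needed. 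For brevity I would simply write: the operator $A = -\Delta$ with domain $D(A) = \{w\in V,\ \Delta w\in H\}$ where $V = H^2\cap H^1_0(\Omega)$ is the realization of the bi-Laplacian under Navier boundary conditions, a fourth order elliptic operator, so $G(A,4s)\subset G^s(\Omega)$ by Theorem 3 of \cite{N-Z}, and the conclusion follows from Theorem \ref{mainth} exactly as in the proof of Theorem \ref{CPlate}.
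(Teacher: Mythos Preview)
Your proposal is correct and follows exactly the paper's approach: the paper's own proof is literally ``Same as for Theorem \ref{CPlate}'', i.e.\ invoke Theorem~3 of \cite{N-Z} for the fourth order elliptic operator to get $G(A,4s)\subset G^s(\Omega)$ and then apply Theorem~\ref{mainth}. Your additional remark that the Newberger--Zielezny result is an interior statement, hence insensitive to whether the bi-Laplacian is realized with clamped or Navier boundary conditions, is precisely the point that makes the argument ``the same'' and is worth stating explicitly. One small slip: in your analyticity sentence you write that $\min\{\frac{1}{4\alpha},\frac{1}{4(1-\alpha)}\}\le 1$ is equivalent to $\alpha\ge 1/4$ \emph{and} $1-\alpha\ge 1/4$, but a minimum is $\le 1$ as soon as \emph{one} term is; the conclusion $\alpha\in[1/4,3/4]$ is nevertheless the intended one, since the $\sigma$ (and hence $s$) coming from the proof of Theorem~\ref{mainth} is actually $\max\{\frac{1}{\alpha},\frac{1}{1-\alpha}\}$ (the paper's ``$\min$'' in \eqref{mainrs} and \eqref{mainrs3} is a typo), and with $\max$ your equivalence is correct.
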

 
 \begin{proof}  Same as for Theorem \ref{CPlate}.\end{proof} 
 
 \begin{Rmk} It is clear from the structure of the proof of Theorem \ref{mainth} that $B$ does not need to be an exact power of $A$ for the result to hold true. For instance, a linear combination with positive coefficients of an arbitrary number of powers of $A$ : $B= \sum c_i A^{\alpha_i} $ will give the same regularity result with $\alpha$ replaced by the highest exponent. Similarly $ B = c (dI+ A)^\alpha $ or a sum of such operators will give the same result as $ A^\alpha$. \end{Rmk}  
\section{Appendix} In this appendix we establish a few properties of general interest that have been used in the proofs of our main results.  \subsection{Equivalent formulations of Gevrey spaces} In this section, we clarify once and for all the connection between the various definitions of Gevrey regularity found in the literature. The original definition by Gevrey in \cite{G} involves a power of the multi-factorial  $ p! : p_1!...p_N! $ when $p:= (p_1,...p_N)$ is an N-vector with integer coordinates. This was in fact motivated by the possibility of considering different regularity levels in the N different differentiation directions. When one is not interested in doing that, one might consider replacing the multi-factorial by the factorial of the total differentiation order $|p|: = p_1+...+ p_N$, i.e., consider $|p|!$ instead of $ p! $  Do we still find the same regularity class? Alternatively, many authors replaced in the definition the factorials $p_j !$ by $p_j^{p_j} .$ , justifying usually this change by Stirling's asymptotic formula. Then what about using simply the apparently larger number $|p|^{|p|}$? The next result shows that all those notions are equivalent, and we can quantify exactly the equivalence constants as a function of the dimension only. 

\begin{Prop}\end{Prop} For any $N\in \mathbb{N}^*$ and any $p= (p_1,...p_N)\in {\mathbb{N}^*}^N$ we have 
$$ p_1!...p_N!= p! \le p^p (= \prod _1^N p_i^{p_i}) \le |p|^{|p|} \le (4^{N-1})^{|p|}p^p \le (4^{N-1}e)^{|p|} p! $$

\begin{proof}The first 2 inequalities $ p_1!...p_N!= p! \le p^p (= \prod _1^N p_i^{p_i}) \le |p|^{|p|} $ are completely obvious. So we are left to check that $|p|^{|p|} \le (4^{N-1})^{|p|}p^p \le (4^{N-1}e)^{|p|} p! $
We do this in 3 steps. \\

Step 1. In the case of two components, we claim that $$ \forall (p,q)\in {\mathbb{N}^*}^2, \quad (p+q)^{p+q}\le 2^{2(p+q)} p^p q^q $$
Indeed assuming $p\leq$ with $2^r p\le q\le 2^{r+1} p$, we obtain first 
$$ q^p\le (2^{r+1} p)^p = p^p 2^{p+rp} \le p^p 2^{p+q}$$ since $rp\le 2^r p \le q.$ Then 
$$ (p+q)^{p+q} \le (2q)^{(p+q)}= 2^{p+q} q^{p+q}= q^q 2^{p+q} q^{p}\le q^q 2^{p+q} p^p 2^{p+q}$$ and the claim is justified. \\

Step 2. In the case of three or more components, we prove by induction that  $$|p|^{|p|} \le (4^{N-1})^{|p|} p^p $$

Assuming the result to be true for N-1 components, we use the result for 2 components with $p_1$ and $ p_2+...p_N$ in place of $p$ and $q$, which gives
$$ (p_1+p_2 +...p_N)^{(p_1+p_2 +...p_N)} \le 2^{2|p|} p_1^{p_1}(p_2 +...p_N)^{(p_2 +...p_N)} $$ 
$$ \le 2^{2|p|} p_1^{p_1} 2^{2(N-2)(p_2+...+p_N)}{p_2}^{p_2} ...{p_N}^{p_N} \le 2^{2(N-1)|p|} p_1^{p_1} {p_2}^{p_2} ...{p_N}^{p_N} $$  and the result follows. \\

Step 3. The concavity of the function $\ln$ on $(0, 1)$ implies that for any integer $k$, we have $k^k\le e^k k!$ Hence $$ p^p \le  e^{|p|}p! $$

The proof is concluded by combining Step 2 and Step 3. 

\end {proof}  
\subsection {Gevrey spaces for a power of an operator}  
\begin{Prop} For any positive self-adjoint operator $A$ and any positive numbers $\alpha, s$ we have 
$$  G (A, s)  = G(A^\alpha, \alpha s).$$  
\end{Prop}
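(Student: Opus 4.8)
The plan is to prove the two inclusions $G(A,s)\subseteq G(A^\alpha,\alpha s)$ and $G(A^\alpha,\alpha s)\subseteq G(A,s)$ separately, using the spectral theorem for the positive self-adjoint operator $A$. Writing $A=\int_0^\infty \lambda\, dE_\lambda$ and $A^\alpha=\int_0^\infty \lambda^\alpha\, dE_\lambda$, the condition $u\in D(A^n)$ for all $n$ together with the bound $|A^n u|\le R^n n^{sn}$ is equivalent, by Parseval, to $\int_0^\infty \lambda^{2n}\, d|E_\lambda u|^2 \le R^{2n} n^{2sn}$ for all $n\ge 1$. Similarly $u\in G(A^\alpha,\alpha s)$ means $\int_0^\infty \lambda^{2\alpha n}\, d|E_\lambda u|^2 \le R'^{2n} n^{2\alpha s n}$ for all $n\ge 1$. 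So the whole statement reduces to a purely scalar comparison of moment-type bounds for the measure $d|E_\lambda u|^2$.

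First I would handle the easy direction. If $u\in G(A,s)$ then for any integer $m\ge 1$, taking $n=\lceil \alpha m\rceil$ (so $n\le \alpha m + 1$) and using the operator monotonicity/interpolation inequality $|A^{\alpha m}u|\le |u|^{1-\theta}|A^n u|^\theta$ with $\alpha m = \theta n$ (valid since $A^{\alpha m}$ interpolates between $I$ and $A^n$ by the spectral theorem), one gets $|A^{\alpha m}u|\le |u|^{1-\theta}(R^n n^{sn})^\theta \le C\, R^{\alpha m}(\alpha m+1)^{s\alpha m}$, and then $(\alpha m+1)^{s\alpha m}$ is bounded by $(R'')^m m^{\alpha s m}$ for a suitable constant, giving $u\in G(A^\alpha,\alpha s)$. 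The reverse direction is symmetric: if $u\in G(A^\alpha,\alpha s)$, apply the same interpolation argument with the roles of $A$ and $A^\alpha$ swapped (i.e.\ use that $A^n=(A^\alpha)^{n/\alpha}$ and interpolate $(A^\alpha)^{n/\alpha}$ between $I$ and $(A^\alpha)^{\lceil n/\alpha\rceil}$), producing $|A^n u|\le C'(R')^{n}(n/\alpha+1)^{\alpha s(n/\alpha)}$, which is bounded by $(R''')^n n^{sn}$.

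The main technical point — and the step I expect to need the most care — is controlling the polynomial-in-$n$ correction factors (things like $(\alpha m+1)^{s\alpha m}$ versus $m^{s\alpha m}$, and the constant $|u|^{1-\theta}$ which must be absorbed uniformly in $n$) so that they are genuinely of the form $\tilde R^{n}$ and do not secretly contribute an extra $n^{cn}$ growth. The key elementary fact here is that $(n+1)^n = n^n(1+1/n)^n \le e\, n^n$, and more generally $(an+b)^{cn}\le \tilde R^n\, n^{cn}$ for constants $a,b,c>0$; Stirling-type estimates like those already recorded in Proposition~6.1 (Step~3, $k^k\le e^k k!$) are exactly of this flavor. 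A secondary point is to make sure the interpolation inequality $|A^{\theta n}u|\le |u|^{1-\theta}|A^n u|^\theta$ for $\theta\in(0,1)$ is justified — this is just Hölder's inequality applied to $\int \lambda^{\theta n}\,d|E_\lambda u|^2 = \int (\lambda^n)^\theta\, d|E_\lambda u|^{2(1-\theta)}\, d|E_\lambda u|^{2\theta}$ against the total mass $|u|^2$, valid since $u\in D(A^n)$. No other obstacle is anticipated; the argument is entirely at the level of the scalar spectral measure.
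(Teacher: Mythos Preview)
Your proposal is correct and takes essentially the same approach as the paper: both use the interpolation inequality $|A^{\theta n}u|\le |A^{n}u|^{\theta}|u|^{1-\theta}$ (immediate from H\"older on the spectral measure) to control non-integer powers, and then obtain the reverse inclusion by exchanging the roles of $A$ and $A^{\alpha}$. The only cosmetic difference is that the paper splits into the cases $\alpha\le 1$ and $\alpha\ge 1$ (interpolating at $\theta=\alpha$, resp.\ extending the bound to all real exponents $\tau\ge 1$), whereas you handle all $\alpha$ at once by interpolating up to $n=\lceil \alpha m\rceil$; your explicit absorption of the factor $(\alpha m+1)^{s\alpha m}\le e^{s}(\alpha^{s\alpha})^{m}m^{s\alpha m}$ is precisely the step the paper leaves implicit.
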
 
\begin {proof}  Assume that $u\in D(A^n) $ for all n and for some $R>0$ we have $$ \forall n\in\mathbb{N}, \quad  |A^n u|\le R^n n^{s n}.$$ 
Then for any $\theta \in (0, 1)$, by the following interpolation inequality 
\begin{equation*}
| A^\theta u | \leq  | A u|^{\theta} |u|^{1-\theta} \qquad \forall u \in D(A), 
\end{equation*}
we obtain 
 $$
    \forall n\in\mathbb{N}, \quad  
      |A^{n\theta}  u| \le  ~\! R^{n\theta} n^{\theta s n} |u| ^{(1-\theta) n} 
        =  ~\![R^{\theta} |u| ^{(1-\theta)}]^n n^{(\theta s) n} $$ which means exactly that $u \in G(A^\theta, \theta s).$ Hence for all $\alpha \in (0,1],$ 
$$ 
 G (A, s)  \subset  G(A^\alpha, \alpha s).
$$
 We also find  
\begin{align*} 
    \forall m \in \mathbb{N}, \quad  |A^{m+\theta}  u| 
            & \le R^m m^{sm} ~\! |Au|^{\theta} |u|^{(1-\theta)}
\\
            & \le   |u|^{1-\theta} ~\! 
                      [\max \{R, |Au|\}]^{m+\theta} (m +\theta)^{s(m +\theta)}, 
\end{align*} 
 which implies in particular
$$ 
     \forall \tau \ge 1, \quad  |A^{\tau} u| \le [\max \{R, |u|, |Au|, 1\}] ^{\tau} (\tau)^{s\tau}. 
$$ 
 In particular, taking $ \tau = n \alpha $, we have 
 $$ \forall n\in\mathbb{N}-\{0\}, \quad   |A^{n\alpha} u|\le [\max \{R, |u|, |Au|, 1\}\alpha^s] ^{n\alpha} n^{sn\alpha}$$ which means exactly that $u \in G(A^\alpha, \alpha s).$ Hence for all $\alpha \ge 1,$ $$  G (A, s)  \subset  G(A^\alpha, \alpha s)$$ Finally $$ \forall \alpha>0, \forall s>0, \quad   G (A, s)  \subset  G(A^\alpha, \alpha s)$$ The equality follows by exchanging the roles of $A$ and $A^\alpha$.
\end {proof} 

\subsection {Some operatorial inequalities}  
\begin{Prop}  Let $A$ be any positive self-adjoint operator on a Hilbert space H.  Then 

$$ \forall \beta \in [0, 1], \quad  (I+ A)^\beta \le I + A^\beta $$ 

$$ \forall \beta \in [0, 1],  \forall u \in D(A),\quad  |A^\beta u|^2 \le |u|^2 +|A u|^2  $$ 

\end{Prop}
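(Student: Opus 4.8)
The plan is to use the spectral theorem to reduce both statements to elementary scalar inequalities on the spectrum $[0,\infty)$ of $A$. Since $A$ is positive and self-adjoint, there is a spectral measure $dE_\lambda$ supported on $[0,\infty)$ such that for any Borel function $g$ and any $u\in D(g(A))$ we have $|g(A)u|^2 = \int_0^\infty |g(\lambda)|^2 \, d\langle E_\lambda u, u\rangle$. Thus $(I+A)^\beta \le I + A^\beta$ as operators is equivalent to the numerical inequality $(1+\lambda)^\beta \le 1 + \lambda^\beta$ for all $\lambda \ge 0$, and the second claim is equivalent to $\lambda^{2\beta} \le 1 + \lambda^2$ for all $\lambda \ge 0$; once the scalar inequalities are known, integrating against $d\langle E_\lambda u,u\rangle \ge 0$ gives the operator/vector statements immediately.

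For the first scalar inequality, fix $\lambda \ge 0$ and consider $h(\beta) := 1 + \lambda^\beta - (1+\lambda)^\beta$ on $[0,1]$. We have $h(0) = 1 > 0$ and $h(1) = 0$. The cleanest argument is subadditivity of $t \mapsto t^\beta$ for $\beta \in [0,1]$: writing $1+\lambda = a+b$ with $a=1$, $b=\lambda$, concavity of $t\mapsto t^\beta$ together with $0^\beta = 0$ gives $(a+b)^\beta \le a^\beta + b^\beta$, i.e. $(1+\lambda)^\beta \le 1 + \lambda^\beta$. (One can also get this from $\lambda^\beta \ge \lambda/(1+\lambda)^{1-\beta}$ and $1 \ge 1/(1+\lambda)^{1-\beta}$, adding the two.) For the second, $\lambda^{2\beta} \le \max\{1,\lambda^2\} \le 1 + \lambda^2$ since $2\beta \in [0,2]$: if $\lambda \le 1$ then $\lambda^{2\beta} \le 1$, and if $\lambda \ge 1$ then $\lambda^{2\beta} \le \lambda^2$. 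Feeding this into the spectral integral yields $|A^\beta u|^2 \le |u|^2 + |Au|^2$ for $u \in D(A)$ (note $D(A) \subset D(A^\beta)$ for $\beta \in [0,1]$, so the left side is finite).

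I do not anticipate a genuine obstacle here; the only point requiring a little care is the domain bookkeeping — ensuring that the spectral integral manipulations are justified, i.e. that $u \in D(A)$ guarantees $u \in D(A^\beta)$ and $u \in D((I+A)^\beta)$ so that all three integrals converge, which follows from the scalar bounds just established together with $u \in D(A) \iff \int_0^\infty \lambda^2\, d\langle E_\lambda u,u\rangle < \infty$. Everything else is routine.
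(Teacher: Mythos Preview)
Your argument is correct. For the first inequality your route coincides with the paper's: both invoke the functional calculus to reduce $(I+A)^\beta \le I + A^\beta$ to the scalar inequality $(1+\lambda)^\beta \le 1+\lambda^\beta$ on $[0,\infty)$. For the second inequality the paper proceeds differently: instead of integrating the pointwise bound $\lambda^{2\beta}\le 1+\lambda^2$ against the spectral measure, it uses the interpolation inequality $|A^\beta u|\le |Au|^{\beta}|u|^{1-\beta}$ and then applies Young's inequality with exponents $1/\beta$ and $1/(1-\beta)$ to the product $|Au|^{2\beta}|u|^{2(1-\beta)}$. Your direct spectral argument is arguably more elementary (a one-line case split on $\lambda\le 1$ versus $\lambda\ge 1$), while the paper's version has the minor advantage that it can be phrased without explicitly writing the spectral resolution, relying only on the abstract interpolation estimate for fractional powers. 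Either way the content is the same and the domain remarks you make are adequate.
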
 
\begin{proof} 
The first inequality follows classically from the methods of operator calculus invoking the scalar inequality $$ \forall \beta \in [0, 1],\forall h>0, \quad  (I+ h)^\beta \le I + h^\beta $$  As for the second inequality we just write, assuming $\beta<1$,
 $$ |A^\beta u|^2 \le |Au|^{2\beta} |u|^{2(1-\beta )}\le [A|u|^{2\beta}]^{\frac{1}{\beta}} + [|u|^{2(1-\beta )}] ^{\frac{1}{(1-\beta)}} $$ as a consequence of interpolation and Young's inequality applied with the conjugate exponents $\frac{1}{\beta}$ and $\frac{1}{(1-\beta)}.$ 
 \end {proof}

%%%%%%%%%%%%%%%%%%%%%%%%%% Section 4 %%%%%%%%%%%%%%%%%%%%%%%%

 \end{document}